\theoremstyle{plain} 
\newtheorem{thm}{Theorem}[section]
\newtheorem{prop}[thm]{Proposition}
\newtheorem{lmm}[thm]{Lemma}
\newtheorem{question}[thm]{Question}
\theoremstyle{definition} 
\newtheorem{dfn}[thm]{Definition}
\theoremstyle{remark} 
\newtheorem{rmk}[thm]{Remark}
\newcommand{\cC}{\mathcal{C}}
\newcommand{\cM}{\mathcal{M}}
\newcommand{\cT}{\mathcal{T}}
\newcommand{\cX}{\mathcal{X}}
\newcommand{\N}{\mathbb{N}}
\newcommand{\Z}{\mathbb{Z}}
\newcommand{\Q}{\mathbb{Q}}
\newcommand{\C}{\mathbb{C}}
\newcommand{\bbT}{\mathbb{T}}
\renewcommand{\epsilon}{\varepsilon}
\renewcommand{\phi}{\varphi}
\newcommand{\msout}{\bgroup\markoverwith{\textcolor{blue}{\rule[0.55ex]{2pt}{0.4pt}}}\ULon}
\newcommand{\mcross}{\bgroup\markoverwith{\textcolor{blue}{$\times$}}\ULon}
\newcommand{\bsout}{\bgroup\markoverwith{\textcolor{green}{\rule[0.55ex]{2pt}{0.4pt}}}\ULon} 
\newcommand{\bcross}{\bgroup\markoverwith{\textcolor{green}{$\times$}}\ULon} 
\newcommand{\fsout}{\bgroup\markoverwith{\textcolor{red}{\rule[0.55ex]{2pt}{0.4pt}}}\ULon} 
\newcommand{\fcross}{\bgroup\markoverwith{\textcolor{red}{$\times$}}\ULon} 
\begin{document}

\title[Moduli stacks of Riemann surfaces with symmetry]{Smooth
covers of moduli stacks of Riemann surfaces with symmetry}

\author{Fabio Perroni}
\thanks{}
\address[F.\ Perroni]{Dipartimento di Matematica e Geoscienze, Università degli Studi di Trieste, via Valerio 12/1, 34127 Trieste, Italy}
\email{fperroni@units.it}


\keywords{}

\maketitle

\textit{Dedicated to  Fabrizio Catanese on the occasion of his 70th birthday}

\begin{abstract}
We construct explicitly a finite  cover of the moduli stack of compact Riemann surfaces with a given group of symmetries 
by a smooth quasi-projective variety.
\end{abstract}

\tableofcontents


\section{Introduction}
For a finite group $G$, the locus $M_g(G)$  (in the moduli space $M_g$) of curves
that have an effective action by $G$ plays an important role in the study of the geometry of $M_g$
(for example its singularities \cite{Cor87}, \cite{Cat12}), in  the study of
Shimura varieties (see e.g.  \cite{FPP}, \cite{MZ}, \cite{CFGP} and the references therein),
of totally geodesic subvarieties of $M_g$ \cite{EMMW}, and also in the classification of higher dimensional 
varieties (see e.g. \cite{Cat15}, \cite{FrGl}, \cite{Cat17}, \cite{LLR}). 
Similar loci in the moduli space of higher dimensional varieties have been studied in \cite{Li18}.

To investigate the geometry of $M_g(G)$ it is more natural 
to introduce  the moduli stack $\mathcal{M}_g(G)$ of genus $g$ compact Riemann 
surfaces with an effective action by $G$. Then we can obtain $M_g(G)$ as the image of a finite morphism $\mathcal{M}_g(G) \to M_g$.
In this paper we study some geometric properties of $\mathcal{M}_g(G)$.
If $g\geq 2$, $\mathcal{M}_g(G)$ is a complex orbifold, whose connected components
are in bijection with the set $\mathbb{T}$  of topological types of the $G$-actions.
For any $\tau \in \mathbb{T}$, let $\mathcal{M}_g(G, \tau)$ be the corresponding connected component.
It is  known that $\mathcal{M}_g(G, \tau)$ is isomorphic to  the  stack quotient
$\left[ \mathcal{T}_g^{\tau (G)} / {\rm C}_{\Gamma_g}(\tau (G)) \right]$,
where $\mathcal{T}_g^{\tau (G)}$ is the  locus  of points, in the Teichm\"uller space $\mathcal{T}_g$,
that are fixed by $\tau (G)$, and 
${\rm C}_{\Gamma_g}(\tau (G))$ is the centralizer of $\tau (G)$ in the mapping class group $\Gamma_g$.
Using the theory of level structures 
we define a smooth quasi-projective variety $Z_\tau$ such that 
$\mathcal{M}_g(G, \tau) \cong \left[ Z_\tau/ \bar{\rm C}_\tau\right]$, where $\bar{\rm C}_\tau$ is a finite group.
Hence $\mathcal{M}_g(G, \tau)$ is a smooth Deligne-Mumford stack and  
$Z_\tau \to \mathcal{M}_g(G, \tau)$ is a finite Galois cover. 
The disjoint union of the $Z_\tau$'s is a finite smooth cover of $\mathcal{M}_g(G)$, since
$\bbT$ is finite (see e.g. \cite{CLP16}). 
Notice that the existence of a smooth quasi-projective variety $Z$ and a finite flat morphism 
$Z\to \mathcal{M}_g(G)$ follows also from  \cite{KV04}.

\subsubsection*{Acknowledgements} 
I would like to thank Fabrizio Catanese for introducing me to the subject of moduli spaces of curves 
with symmetries, and for teaching me a lot about this. 
I am grateful  to Paola Frediani and Alessandro Ghigi for discussions that 
motivated the present article. 

The research was partially supported by the national projects
PRIN 2015EYPTSB-PE1 ``Geometria delle variet\`a algebriche''
and 2017SSNZAW 005-PE1 ``Moduli Theory and Birational Classification'',
by the research group GNSAGA of INDAM and by FRA  of the University of Trieste.


\section{Moduli spaces of curves with symmetry} 
Throughout the article $G$ is a finite group and $g$ is an integer greater or equal than $2$. 
Let $\mathcal{M}_g(G)$ be the stack, in the complex analytic category, 
whose objects are pairs $(\pi \colon \mathcal C \to B, \alpha)$,
where $\pi \colon \mathcal C \to B$ is a family of compact Riemann surfaces of genus $g$ and 
$\alpha \colon G \times \mathcal C \to \mathcal C$
is an effective (holomorphic) action of $G$ on $\mathcal C$ such that, for any $a\in G$, 
$\pi \circ \alpha (a, \_)=\pi$. 
A morphism $(\Phi, \varphi)\colon (\pi \colon \mathcal C \to B, \alpha) \to (\pi' \colon \mathcal C' \to B', \alpha')$
is a Cartesian diagram 
\begin{center}
\begin{tikzcd}
\mathcal C \arrow[r, "\Phi"] \arrow[d, "\pi"] & \mathcal C' \arrow[d, "\pi' "] \\
B \arrow[r, "\varphi"] & B' 
\end{tikzcd}
\end{center}
such that, for any $a\in G$, $\Phi \circ \alpha (a, \_) \circ \Phi^{-1}= \alpha'(a, \_)$. 

Using the Teichm\"uller space $\cT_g$, we are going to define a complex orbifold structure on $\mathcal{M}_g(G)$.
Given a compact, connected, oriented topological surface of genus $g$, $\Sigma_g$, recall that
a Teichm\"uller structure on a  Riemann surface $C$   is  
the isotopy class of an orientation preserving homeomorphism $f\colon C \to \Sigma_g$,
it will be denoted with $[f]$. Two  Riemann surfaces with Teichm\"uller structures 
$(C, [f]), (C',[f'])$ are isomorphic, if there exists an isomorphism $F \colon C \to C'$ such that
$[f]= [f' \circ F]$. We will denote with $[C, [f]]$ the class of $(C,[f])$.
Then, $\cT_g$ is the set of isomorphism classes $[C,[f]]$ of compact Riemann 
surfaces of genus $g$ with Teichm\"uller structures. 
The mapping class group of $\Sigma_g$, denoted by $\Gamma_g$,  is the group of all isotopy classes 
of orientation preserving homeomorphisms of $\Sigma_g$. 
There is a natural action of $\Gamma_g$ on $\cT_g$, given by
$$
[\gamma] \cdot [C,[f]] = [C, [\gamma \circ f]] \, , \quad \forall [\gamma] \in \Gamma_g , \, [C, [f]] \in \cT_g \, . 
$$
Furthermore, for any $[C,[f]] \in \cT_g$, the homomorphism
\begin{equation*}
\sigma_{[f]} \colon {\rm Aut}(C) \to \Gamma_g \, , \quad \Phi \mapsto [f\circ \Phi \circ f^{-1}]
\end{equation*} 
is injective and its  image is the stabilizer of $[C,[f]]$ in $\Gamma_g$, that we denote by ${\rm Stab}_{\Gamma_g}([C,[f]])$.
We collect in the following theorem several results about the Teichm\"uller space, for a proof and for more 
details we refer to \cite{AC}, \cite{ACG}.
\begin{thm}
$\mathcal{T}_g$ has a natural structure of a complex manifold which is 
homeomorphic to the unit ball in $\C^{3g - 3}$. The action of $\Gamma_g$ on $\cT_g$
is holomorphic and properly discontinuous. The map 
$$
\cT_g \to M_g \, , \quad [C, [f]] \mapsto [C] \, , 
$$
to the coarse moduli space of compact Riemann surfaces of genus $g$ yields  an isomorphism  
$\cT_g/\Gamma_g \cong M_g$.

Furthermore there is a universal family of Riemann surfaces of genus $g$ with Teichm\"uller structure 
$$
\eta \colon \cX_g \to \cT_g \, . 
$$
\end{thm}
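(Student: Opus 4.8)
The plan is to follow the classical Ahlfors--Bers construction (see \cite{AC}, \cite{ACG}): present $\cT_g$ as a quotient of the unit ball in a space of Beltrami differentials, transport a complex structure onto it through the Bers embedding, and then read off the remaining assertions. First I would fix a base point $[C_0,[f_0]]\in\cT_g$ --- i.e.\ a genus $g$ Riemann surface with a marking --- and, via the measurable Riemann mapping theorem, attach to each Beltrami differential $\mu\in L^\infty\bigl(C_0,\overline{K_{C_0}}\otimes K_{C_0}^{-1}\bigr)$ with $\|\mu\|_\infty<1$ the quasiconformal homeomorphism $w^\mu\colon C_0\to C_\mu$ it integrates to; transporting the marking along $w^\mu$ gives a point $\Psi(\mu)\in\cT_g$. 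I would then check that $\Psi$ is surjective (any two markings of a fixed surface differ by an isotopy, realizable quasiconformally) and that $\Psi(\mu)=\Psi(\nu)$ exactly when $w^\nu\circ(w^\mu)^{-1}$ is isotopic to a biholomorphism, so that $\cT_g$ is the quotient of the unit ball of Beltrami differentials by this explicit equivalence relation.

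Next I would equip this quotient with a complex structure, and simultaneously embed it in $\C^{3g-3}$, via the Bers embedding: lift $\mu$ to the upper half-plane through the uniformization $\mathbb{H}\to C_0$, extend it by $0$ across $\mathbb{R}$, solve the Beltrami equation on $\widehat{\C}$, and take the Schwarzian derivative of the resulting univalent map restricted to the lower half-plane. The output is a bounded holomorphic quadratic differential on $C_0$, and the space of these has dimension $\dim H^0(C_0,K_{C_0}^{\otimes 2})=3g-3$ by Riemann--Roch --- equivalently $\dim H^1(C_0,T_{C_0})=3g-3$ by Serre duality, which is also the dimension of the deformation space of $C_0$, unobstructed since $H^2(C_0,T_{C_0})=0$. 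Invoking the Ahlfors--Bers theorem on the holomorphic dependence of solutions of the Beltrami equation on their coefficient, the image is open in $\C^{3g-3}$, bounded (Nehari's estimate for Schwarzians of univalent functions), and the transition maps between the charts based at different points of $\cT_g$ are biholomorphic. This is the step I expect to be the main obstacle: it is precisely here that the set-theoretic quotient of the first paragraph acquires the structure of a complex manifold, and it rests on the nontrivial analytic input of holomorphic parameter dependence for the Beltrami equation.

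For the topology of $\cT_g$ I would invoke Teichm\"uller's existence and uniqueness theorems for extremal quasiconformal maps, which identify the geodesic rays (for the Teichm\"uller metric) emanating from the base point with the unit sphere in the $(3g-3)$-dimensional space of holomorphic quadratic differentials on $C_0$; sending a quadratic differential $q$ of $L^1$-norm $t<1$ to the point at Teichm\"uller distance $\tfrac12\log\tfrac{1+t}{1-t}$ along the ray through $q$ is then a homeomorphism of the open unit ball of $\C^{3g-3}$ onto $\cT_g$. The $\Gamma_g$-action is holomorphic because replacing $[f]$ by $[\gamma\circ f]$ alters only the marking and not the underlying surfaces $C_\mu$, so in Bers coordinates it permutes the points of the domain biholomorphically. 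It is properly discontinuous because if $[\gamma_n]\cdot X_n\to Y$ with $X_n\to X$ in $\cT_g$, then for a fixed finite collection $c_1,\dots,c_k$ of simple closed curves filling $\Sigma_g$ the hyperbolic lengths $\ell_{X_n}(\gamma_n^{-1}c_i)=\ell_{[\gamma_n]\cdot X_n}(c_i)$ stay bounded; since only finitely many isotopy classes of simple closed curves have bounded length on a fixed surface, each tuple $(\gamma_n^{-1}c_1,\dots,\gamma_n^{-1}c_k)$ lies in a fixed finite set, and only finitely many mapping classes carry $(c_1,\dots,c_k)$ to a given tuple (the Alexander method), so the $[\gamma_n]$ form a finite set, contradicting their being distinct. (This also recovers finiteness of ${\rm Stab}_{\Gamma_g}([Y])$, already built into the injectivity of $\sigma_{[f]}$ together with Hurwitz's bound on ${\rm Aut}(C)$.) Finally, since the $\Gamma_g$-orbit of $[C,[f]]$ is by construction the set of all Teichm\"uller structures on $C$, the map $[C,[f]]\mapsto[C]$ descends to a bijection $\cT_g/\Gamma_g\to M_g$, which I would check is an isomorphism of complex analytic spaces by comparing, near $[C_0]$, the local model $H^1(C_0,T_{C_0})/{\rm Aut}(C_0)$ --- obtained from Cartan's linearization of the ${\rm Aut}(C_0)$-action on a slice through the base point --- with the Kuranishi description of $M_g$.

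For the universal family I would observe that over the unit ball of Beltrami differentials the surfaces $C_\mu$ assemble into a holomorphic family (again by holomorphic parameter dependence in the Beltrami equation) carrying a tautological Teichm\"uller structure and equivariant for the equivalence relation above, so it descends to a family $\eta\colon\cX_g\to\cT_g$ with Teichm\"uller structure. It is universal because any family $\pi\colon\mathcal{C}\to B$ of genus $g$ Riemann surfaces with Teichm\"uller structure is classified by the holomorphic map $B\to\cT_g$ recording the marked fibres, and $\pi$ is the pullback of $\eta$ along it. Everything past the Bers-embedding step is essentially formal once that step is in place.
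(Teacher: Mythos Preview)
Your proof sketch is correct and follows the classical Ahlfors--Bers route that the cited references \cite{AC}, \cite{ACG} develop in detail. Note, however, that the paper does not actually prove this theorem: it is stated as background material with the sentence ``for a proof and for more details we refer to \cite{AC}, \cite{ACG}'', and no argument is given. So there is nothing to compare against beyond observing that your outline is precisely the approach those references take --- Beltrami differentials and the measurable Riemann mapping theorem to parametrize, the Bers embedding for the complex structure, Teichm\"uller's extremal map theorems for the ball topology, and length functions for proper discontinuity. Since the theorem functions in the paper purely as a black box, your level of detail already exceeds what the paper supplies.
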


Let  $\alpha$ be an effective action of $G$ on $C$, viewed as an injective group homomorphism
$\alpha \colon G \to {\rm Aut}(C)$. Let us choose an orientation preserving homeomorphism $f \colon C \to \Sigma_g$.
Then we have an injective homomorphism  
\begin{equation}\label{rho}
\rho := \sigma_{[f]} \circ \alpha \colon G \to \Gamma_g \, 
\end{equation}
such that $\rho (G) \subset {\rm Stab}_{\Gamma_g}([C, [f]])$.
Notice that, if $f' \colon C \to \Sigma_g$ is another orientation preserving homeomorphism, we obtain a different 
homomorphism $\rho' \colon G \to \Gamma_g$. However $[C,[f]]$ and $[C,[f']]$
belong to the same $\Gamma_g$-orbit, so there exists $[\gamma]\in \Gamma_g$
such that $[\gamma] \cdot [C, [f]] = [C, [f']]$ and $\rho' = [\gamma] \cdot \rho \cdot [\gamma]^{-1}$. 
This motivates the following definitions (that are already present in the literature,
 in slightly different forms, see e.g. \cite{CLP16} and the references therein).
\begin{dfn}\label{topological_type}
Let $\alpha \colon G \to {\rm Aut}(C)$ be an injective homomorphism. Let $\rho$ be defined  in \eqref{rho}.
The \textit{topological type} of the $G$-action $\alpha$ is the class of $\rho$
in ${\rm Hom}^{\rm inj} (G, \Gamma_g)/\Gamma_g$, where 
${\rm Hom}^{\rm inj} (G, \Gamma_g)$ is the set of injective group homomorphisms from $G$ to $\Gamma_g$,
${\rm Hom}^{\rm inj} (G, \Gamma_g)/\Gamma_g$ is the quotient under the action of  $\Gamma_g$
by conjugation. 
\end{dfn}

\begin{dfn}\label{GTg}
The \textit{Teichm\"uller space of  compact Riemann surfaces of genus $g$ with $G$-actions}
is the set
$$
\cT_g(G) = \{([C, [f]], \rho) \in \cT_g \times {\rm Hom}^{\rm inj} (G, \Gamma_g) \, | \, 
\rho (G) \subseteq {\rm Stab}_{\Gamma_g}([C, [f]])\} \, .
$$
\end{dfn}

Notice that, if ${\rm Hom}^{\rm inj} (G, \Gamma_g)\not= \emptyset$, then $\cT_g(G)\not= \emptyset$ by Nielsen realization problem \cite{K83}
and that it carries the following action  by $\Gamma_g$:
\begin{equation}\label{mcgact2}
[\gamma] \cdot ([C, [f]], \rho) = ([\gamma] \cdot [C, [ f]], [\gamma] \cdot \rho \cdot [\gamma]^{-1}) \, ,
\end{equation}
for $[\gamma]\in \Gamma_g$ and  $([C, [f]], \rho) \in \cT_g(G)$.

\begin{prop}\label{mainprop}
$\cT_g(G)$ is a complex manifold. Moreover there is an object  of $\cM_g(G)$,
$(\eta(G) \colon \cX_g(G) \to \cT_g(G), \alpha)$, such that 
the associated classifying morphism 
$\cT_g(G) \to \cM_g(G)$  induces an isomorphism 
$$
\left[ \cT_g(G) / \Gamma_g  \right] \cong \cM_g(G) \, ,
$$ 
where $\left[ \cT_g(G) / \Gamma_g \right]$ is the stack quotient associated to the action
\eqref{mcgact2}. In particular $\cM_g(G)$ has a structure of 
complex orbifold in the sense of \cite{ALR}, \cite[XII, §4.]{ACG}.
\end{prop}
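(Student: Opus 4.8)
The plan is to identify $\cT_g(G)$ with a disjoint union of fixed‐point loci of finite subgroups of $\Gamma_g$ acting on $\cT_g$, to pull back the universal Teichm\"uller curve and equip it with a $G$‑action read off from the $\Gamma_g$‑action on that curve, and finally to compare the two stacks directly.

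\textbf{$\cT_g(G)$ is a complex manifold.} Since $\Gamma_g$ is finitely generated and $G$ is finite, ${\rm Hom}^{\rm inj}(G,\Gamma_g)$ is countable; endowing it with the discrete topology gives $\cT_g\times{\rm Hom}^{\rm inj}(G,\Gamma_g)=\bigsqcup_{\rho}\cT_g$. For a fixed $\rho$, the condition $\rho(G)\subseteq{\rm Stab}_{\Gamma_g}([C,[f]])$ says precisely that $[C,[f]]$ lies in the common fixed locus $\cT_g^{\rho(G)}$ of the finite subgroup $\rho(G)\subset\Gamma_g$, which acts holomorphically and properly discontinuously on the complex manifold $\cT_g$. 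By Cartan's lemma on the linearization of a finite group of holomorphic automorphisms near a fixed point, $\cT_g^{\rho(G)}$ is a closed complex submanifold of $\cT_g$; hence $\cT_g(G)=\bigsqcup_{\rho}\cT_g^{\rho(G)}$ is a complex manifold (non‑empty whenever ${\rm Hom}^{\rm inj}(G,\Gamma_g)\neq\emptyset$, by Nielsen realization \cite{K83}).

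\textbf{The universal object.} I would use that the action of $\Gamma_g$ on $\cT_g$ lifts to a holomorphic action on the universal curve $\cX_g$ — a manifestation of the fact that $\cM_g\cong[\cT_g/\Gamma_g]$, so that $\cX_g$ is the pullback along $\cT_g\to\cM_g$ of the universal curve over $\cM_g$, cf. \cite[XII]{ACG} — and that for $[\gamma]\in{\rm Stab}_{\Gamma_g}([C,[f]])$ the induced automorphism of the fibre $C$ corresponds to $[\gamma]$ under the isomorphism $\sigma_{[f]}$. Let $p\colon\cT_g(G)\to\cT_g$ be the projection; it is $\Gamma_g$‑equivariant, so $\cX_g(G):=p^{*}\cX_g$, with its projection $\eta(G)$, is a $\Gamma_g$‑equivariant family of genus $g$ Riemann surfaces over $\cT_g(G)$. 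On the component over $\cT_g^{\rho(G)}$ let $\alpha(a,\_)$ be the automorphism of $\cX_g(G)$ induced by $\rho(a)$: since $\rho(a)$ fixes $\cT_g^{\rho(G)}$ pointwise this is a holomorphic automorphism over the base, inducing on the fibre over $([C,[f]],\rho)$ the automorphism $\sigma_{[f]}^{-1}(\rho(a))$ of $C$; as $\rho$ and the $\sigma_{[f]}$ are injective this defines an effective holomorphic action $\alpha\colon G\times\cX_g(G)\to\cX_g(G)$, and $(\eta(G)\colon\cX_g(G)\to\cT_g(G),\alpha)$ is an object of $\cM_g(G)$. The delicate point, and the main obstacle, is the holomorphicity of $\alpha$ on the total space: here it is given to us by the $\Gamma_g$‑equivariance of $\cX_g$, but if one instead manufactures $\alpha$ from local trivializations of $\cX_g\to\cT_g$ compatible with Teichm\"uller structures, one must still prove that a continuous, fibrewise holomorphic family of automorphisms of a family of compact Riemann surfaces is holomorphic.

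\textbf{The stack isomorphism and the orbifold structure.} The classifying morphism of the above object, $c\colon\cT_g(G)\to\cM_g(G)$, sends $([C,[f]],\rho)$ to $(C,\sigma_{[f]}^{-1}\circ\rho)$. From $\sigma_{[\gamma\circ f]}(\Phi)=[\gamma]\,\sigma_{[f]}(\Phi)\,[\gamma]^{-1}$ one checks that, for every $[\gamma]\in\Gamma_g$, the tautological identification of the fibres of $\cX_g(G)$ with those of its $[\gamma]$‑pullback is $G$‑equivariant; these identifications are a descent datum through which $c$ factors as a morphism $\bar c\colon[\cT_g(G)/\Gamma_g]\to\cM_g(G)$. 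To prove $\bar c$ is an isomorphism it suffices to show that $\cT_g(G)\to\cM_g(G)$ is essentially surjective locally on the base and that $\cT_g(G)\times_{\cM_g(G)}\cT_g(G)\cong\Gamma_g\times\cT_g(G)$. For the first: given $(\pi\colon\cC\to B,\alpha')$ and a point of $B$, over a contractible neighbourhood $U$ the oriented surface bundle $\cC|_U$ is trivial, so a trivialization provides a family of Teichm\"uller structures and hence a holomorphic map $U\to\cT_g$ (by the universal property of $\cX_g$); since the homomorphisms $\sigma_{[f_b]}\circ\alpha'_b$ are locally constant in $b$, this lifts to a holomorphic map $U\to\cT_g(G)$ pulling $(\cX_g(G),\alpha)$ back to $(\cC|_U,\alpha'|_U)$, and the local lifts glue after quotienting by $\Gamma_g$. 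For the second: two maps $B\to\cT_g(G)$ together with an isomorphism in $\cM_g(G)$ of the corresponding objects amount to two families of Teichm\"uller structures on one and the same $G$‑family; forgetting the $G$‑action and using the corresponding fact for $\cT_g$ (namely $\cM_g\cong[\cT_g/\Gamma_g]$) they differ by a unique element of $\Gamma_g$, and $G$‑equivariance of the isomorphism is then automatic, giving the required identification of fibre products. Hence $\bar c$ is an equivalence. Finally the $\Gamma_g$‑action on $\cT_g(G)$ is holomorphic and properly discontinuous — around $x=([C,[f]],\rho)$ take $U=(V\times\{\rho\})\cap\cT_g(G)$ where $V\ni[C,[f]]$ is a neighbourhood in $\cT_g$ with $[\gamma]V\cap V=\emptyset$ for all but finitely many $[\gamma]\in\Gamma_g$ — so $[\cT_g(G)/\Gamma_g]\cong\cM_g(G)$ is a complex orbifold in the sense of \cite{ALR}, \cite[XII]{ACG}.
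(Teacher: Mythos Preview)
Your proof is correct and follows essentially the same route as the paper: the decomposition $\cT_g(G)=\bigsqcup_{\rho}\cT_g^{\rho(G)}$ into fixed loci of finite subgroups, the construction of the universal object by restricting the universal Teichm\"uller curve and reading off the $G$-action from the $\Gamma_g$-action, and the verification of the stack equivalence. The only presentational difference is that the paper establishes the equivalence by directly constructing a functor from $[\cT_g(G)/\Gamma_g]$ to $\cM_g(G)$ on principal $\Gamma_g$-bundles via descent, whereas you go the other way, factoring the classifying morphism through the quotient stack and checking it is an equivalence by local essential surjectivity and the fibre-product identification; these are two standard and equivalent ways to verify the same statement, and your version supplies more of the details (Cartan's lemma, holomorphicity of $\alpha$, proper discontinuity) that the paper leaves implicit.
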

\begin{proof}
It follows directly from Definition \ref{GTg} that 
$$
\cT_g (G) = \bigsqcup_{\rho \in {\rm Hom}^{\rm inj} (G, \Gamma_g)} \cT_g^{\rho (G)} \, ,
$$
where $\cT_g^{\rho (G)}$ is the  locus of points fixed by $\rho (G)$.
Since $\rho (G)$ is a finite group, $\cT_g^{\rho (G)}$ is a complex submanifold of $\cT_g$, so 
the first claim follows. 

For any $\rho \in {\rm Hom}^{\rm inj} (G, \Gamma_g)$, let $\eta (\rho) \colon \cX_g (\rho) \to \cT_g^{\rho (G)}$
be the restriction of the universal family. There is a natural effective action, $\alpha (\rho)$, of $G$ on $\cX_g (\rho)$
such that $(\eta (\rho)\colon \cX_g (\rho) \to \cT_g^{\rho (G)}, \alpha (\rho))$ is an object of $\cM_g(G)$.
Then we define $(\eta(G) \colon \cX_g(G) \to \cT_g(G), \alpha)$ as the disjoint union of these objects.

To prove the last statement recall that the objects of $\left[ \cT_g(G) / \Gamma_g  \right]$, over a base $B$,
are pairs  $(p\colon P\to B, f\colon P \to \cT_g(G))$, where  $p\colon P\to B$ is a principal
$\Gamma_g$-bundle and $f$ is a $\Gamma_g$-equivariant holomorphic map. 
Let $f^*(\eta (G))$ be the pull-back of the family $\eta (G)$. The action of $\Gamma_g$
on $P$ extends to a free  action on $f^*(\eta (G))$. So $f^*(\eta (G))$ descends to 
a family $\pi \colon \cC \to B$ over $B$. By construction there is a $G$-action, $\alpha$, on $\cC$,
in such a way that $(\pi \colon \cC \to B, \alpha)$ is an object of $\cM_g(G)$.
Similarly one associates to every arrow of $\left[ \cT_g(G) / \Gamma_g  \right]$  an arrow
of $\cM_g(G)$ obtaining an equivalence of categories. 
\end{proof}

Furthermore we have the following result.
\begin{thm}\label{mainthm}
$\cM_g(G)$ is a smooth algebraic Deligne-Mumford stack with quasi-projective coarse moduli space. 
\end{thm}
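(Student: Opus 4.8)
The plan is to decompose $\cM_g(G)$ into its connected components and analyze each one separately, producing an explicit finite Galois cover by a smooth quasi-projective variety. First I would record that, by Proposition \ref{mainprop}, $\cM_g(G) \cong \left[ \cT_g(G)/\Gamma_g \right]$, and that $\cT_g(G)$ is the disjoint union of the fixed loci $\cT_g^{\rho(G)}$ over $\rho \in {\rm Hom}^{\rm inj}(G, \Gamma_g)$. Since $\Gamma_g$ acts on the index set by conjugation with finitely many orbits (the set $\bbT$ of topological types is finite, see e.g. \cite{CLP16}), it suffices to work with one orbit at a time: fixing a representative $\rho$ with topological type $\tau$, the corresponding component is $\cM_g(G,\tau) \cong \left[ \cT_g^{\rho(G)} / {\rm C}_{\Gamma_g}(\rho(G)) \right]$, where ${\rm C}_{\Gamma_g}(\rho(G))$ is the centralizer (this is the stabilizer in $\Gamma_g$ of the component $\cT_g^{\rho(G)}$ together with the homomorphism $\rho$, by the transformation rule \eqref{mcgact2}). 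So the whole statement reduces to showing each $\cM_g(G,\tau)$ is a smooth DM stack with quasi-projective coarse space.

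Next I would introduce level structures to rigidify. For $m \geq 3$, let $\Gamma_g[m]$ be the kernel of the action of $\Gamma_g$ on $\h_1(\Sigma_g, \Z/m\Z)$; this is a finite-index normal subgroup of $\Gamma_g$ acting freely on $\cT_g$, so $\cT_g/\Gamma_g[m] = \cM_g[m]$ is a smooth quasi-projective variety (the moduli space of curves with level-$m$ structure). The key observation is that ${\rm C}_{\Gamma_g}(\rho(G)) \cap \Gamma_g[m]$ also acts freely on $\cT_g^{\rho(G)}$, and that $\cT_g^{\rho(G)}$ is a closed complex submanifold of $\cT_g$ (fixed locus of a finite group acting holomorphically). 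I would set $Z_\tau := \cT_g^{\rho(G)} / \left( {\rm C}_{\Gamma_g}(\rho(G)) \cap \Gamma_g[m] \right)$; this is a locally closed analytic subvariety of $\cM_g[m]$. The crucial input is that $Z_\tau$ is in fact \emph{quasi-projective}: it is a union of connected components of the preimage in $\cM_g[m]$ of the closed subvariety $M_g(G)$ of $M_g$ parametrizing curves admitting a $G$-action of the given type, hence locally closed and algebraic by GAGA-type arguments (Chow's theorem applied to the algebraic structures on $\cM_g[m]$ and $M_g$), and smoothness is inherited from that of $\cT_g^{\rho(G)}$. Then the finite group $\bar{\rm C}_\tau := {\rm C}_{\Gamma_g}(\rho(G)) / \left( {\rm C}_{\Gamma_g}(\rho(G)) \cap \Gamma_g[m] \right)$ acts on $Z_\tau$ and
\[
\cM_g(G,\tau) \cong \left[ \cT_g^{\rho(G)} / {\rm C}_{\Gamma_g}(\rho(G)) \right] \cong \left[ Z_\tau / \bar{\rm C}_\tau \right].
\]

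Finally I would assemble the conclusion: each $\left[ Z_\tau/\bar{\rm C}_\tau\right]$ with $\bar{\rm C}_\tau$ finite and $Z_\tau$ smooth quasi-projective is a smooth separated DM stack, algebraic by the standard equivalence between such analytic quotient stacks and global quotient stacks in the algebraic category, with coarse moduli space $Z_\tau/\bar{\rm C}_\tau$ which is quasi-projective (a geometric quotient of a quasi-projective variety by a finite group exists and is quasi-projective). Taking the disjoint union over the finitely many $\tau \in \bbT$ gives that $\cM_g(G) = \bigsqcup_{\tau} \cM_g(G,\tau)$ is a smooth algebraic DM stack with quasi-projective coarse moduli space $M_g(G) = \bigsqcup_\tau Z_\tau/\bar{\rm C}_\tau$. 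The main obstacle I anticipate is not the stacky formalism but verifying carefully that $Z_\tau$ carries a quasi-projective \emph{algebraic} structure compatible with the analytic one — i.e. identifying $\cT_g^{\rho(G)}/(\,{\rm C}_{\Gamma_g}(\rho(G))\cap\Gamma_g[m])$ with a locally closed algebraic subvariety of the quasi-projective variety $\cM_g[m]$ — which requires knowing that the locus of curves with a prescribed topological type of $G$-action is algebraic and that its components behave well under the level cover; this is where results on the algebraicity of Hurwitz-type loci and the properness of $\cM_g(G) \to M_g$ enter.
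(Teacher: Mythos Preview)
Your overall strategy coincides with the paper's: decompose $\cM_g(G)$ into the components $[\cT_g^{\tau(G)}/{\rm C}_\tau]$, rigidify with a level-$m$ structure ($m\geq 3$), set $Z_\tau = \cT_g^{\tau(G)}/(\Lambda_{[m]}\cap {\rm C}_\tau)$, and conclude $\cM_g(G,\tau)\cong [Z_\tau/\bar{\rm C}_\tau]$ is a smooth DM stack once $Z_\tau$ is known to be quasi-projective.

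There is, however, a genuine gap at the step you yourself flag. You assert that $Z_\tau$ is a locally closed subvariety of $M_g[m]$, identified with a union of components of the preimage of $M_g(G)\subset M_g$. This is not correct: the natural map $Z_\tau \to M_g[m]$ is in general \emph{not} injective. Two points $[C,[f]]$ and $\lambda\cdot[C,[f]]$ of $\cT_g^{\tau(G)}$ with $\lambda\in\Lambda_{[m]}$ have the same image in $M_g[m]$, but there is no reason for $\lambda$ to lie in ${\rm C}_\tau$ (or even in ${\rm N}_\tau$): one only gets that $\lambda^{-1}\tau(G)\lambda$ is another subgroup of the finite stabilizer ${\rm Stab}_{\Gamma_g}([C,[f]])$. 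Concretely, a single level-$m$ curve may carry several distinct $G$-actions of the same topological type, and these give distinct points of $Z_\tau$ over one point of $M_g[m]$. So Chow/GAGA applied to a putative embedding does not suffice.

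The paper handles exactly this point differently. It shows that the map $Z_\tau \to (M_g[m])^{\overline{\tau(G)}}$ factors as a composition $p\circ q$ where $q$ is the quotient by the finite group $(\Lambda_{[m]}\cap {\rm N}_\tau)/(\Lambda_{[m]}\cap {\rm C}_\tau)$ and $p$ has finite fibers (in bijection with certain subgroups of ${\rm Stab}_{\Gamma_g}([C,[f]])$); moreover $p\circ q$ is closed, hence finite. Then quasi-projectivity and algebraicity of $Z_\tau$ follow from the Generalized Riemann Existence Theorem of Grauert--Remmert applied to this finite analytic map onto the quasi-projective variety $(M_g[m])^{\overline{\tau(G)}}$. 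Replacing your embedding claim with this finiteness argument closes the gap; the rest of your proposal then goes through as in the paper.
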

A proof of this theorem will be given in  Section \ref{covers},
where we define  a groupoid 
presentation of $\cM_g(G)$, $X_1 \stackrel [t]{s}{\rightrightarrows} X_0$, with
$X_0$ and $X_1$  smooth quasi-projective algebraic varieties, and such that 
$X_0 \to \cM_g(G)$ is finite \'etale. 

\subsection{On the homology of $M_g(G)$}\label{homology}
Let $\bbT \subset {\rm Hom}^{\rm inj} (G, \Gamma_g)$ be a set of representatives of 
topological types of $G$-actions (see Definition \ref{topological_type}).
Notice that, for any $\tau \in \bbT$, 
$$
{\rm Stab}_{\Gamma_g}(\tau) = {\rm C}_{\Gamma_g}(\tau (G)) \, ,
$$
where ${\rm C}_{\Gamma_g}(\tau (G))$ is the centralizer of $\tau (G)$ in $\Gamma_g$.
Therefore we have an homeomorphism
$$
\cT_g(G) / \Gamma_g \cong \bigsqcup_{\tau \in \bbT} \cT_g^{\tau (G)}/{\rm C}_{\Gamma_g}(\tau (G)) \, .
$$

\begin{dfn}\label{MgGtau}
Let $\tau \in \bbT$. 
The moduli space of compact Riemann surfaces of genus $g$ with $G$-action of topological type $\tau$
is defined as $\cT_g^{\tau (G)}/{\rm C}_{\Gamma_g}(\tau (G))$ and will be denoted by $M_g(G,\tau)$. 
\end{dfn}

We have the following theorem from \cite{Harvey}, \cite{Cat00} 
(see also  \cite{GoHa} and the references therein).
\begin{thm}\label{HarCat}
$\cT_g^{\tau(G)}$ is bi-holomorphic to $\cT_{g',d}$ where
$g'$ is the genus of $C/G$,  $[C,[f]] \in \cT_g^{\tau(G)}$, and $d$ is the number of branch points 
of the quotient map $C \to C/G$. In particular $\cT_g^{\tau(G)}$ is homeomorphic to the unit ball
in $\C^{3g'-3+d}$.
\end{thm}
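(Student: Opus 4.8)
The plan is to realize $\cT_g^{\tau(G)}$ as a Teichm\"uller space of the quotient orbifold. Let $[C,[f]] \in \cT_g^{\tau(G)}$ be a point, so that $\tau(G) \subseteq {\rm Stab}_{\Gamma_g}([C,[f]])$, and by the description of stabilizers via $\sigma_{[f]}$ this means $G$ acts (effectively, holomorphically) on $C$ with a fixed Teichm\"uller-compatible identification. The quotient $C' := C/G$ is a compact Riemann surface of some genus $g'$, and the quotient map $\psi \colon C \to C'$ is branched over a finite set of $d$ points; near each branch point $\psi$ looks like $z \mapsto z^m$ for the order $m$ of the relevant stabilizer. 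First I would make precise the target $\cT_{g',d}$: it is the Teichm\"uller space of genus-$g'$ Riemann surfaces with $d$ ordered (or unordered, depending on the convention one fixes for $\tau$) marked points, which is well known to be a complex manifold homeomorphic to the ball in $\C^{3g'-3+d}$, so the last sentence of the statement follows immediately once the bi-holomorphism is established and the dimension count $3g'-3+d$ is matched.

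The key step is to construct the map in both directions. Given $[C,[f]] \in \cT_g^{\tau(G)}$, send it to $[C/G, \text{branch divisor}, \text{induced Teichm\"uller structure}]$: the homeomorphism $f \colon C \to \Sigma_g$ together with the fixed topological $G$-action descends to a homeomorphism of $C/G$ onto the fixed model orbifold surface $\Sigma_{g'}$ with its $d$ marked points, and its isotopy class is well defined because the whole construction is $G$-equivariant and the topological type $\tau$ is fixed along $\cT_g^{\tau(G)}$. Conversely, given a genus-$g'$ Riemann surface $C'$ with $d$ marked points and a Teichm\"uller structure, one reconstructs $C$ as the unique (up to iso) Galois cover of $C'$ branched over the marked points with the prescribed local monodromy data and the prescribed global monodromy representation $\pi_1(C' \setminus \{d \text{ pts}\}) \to G$ — this is exactly the data encoded by $\tau$, via Riemann's existence theorem. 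One checks these two assignments are mutually inverse and holomorphic: holomorphy in the downward direction is clear since $C \mapsto C/G$ is holomorphic in families, and in the upward direction one uses that the branched cover can be constructed holomorphically in families over $\cT_{g',d}$ (pull back the universal family, remove the marked sections, take the cover determined by the fixed monodromy, and fill in the branch points). The Riemann--Hurwitz formula $2g-2 = |G|(2g'-2) + \sum (\text{ramification})$ together with the combinatorics of $\tau$ pins down $d$ and gives the dimension identity $3g-3 = \dim \cT_g^{\tau(G)}$ matching $3g'-3+d$ only in special cases; in general $\cT_g^{\tau(G)}$ is a \emph{proper} submanifold, and its dimension is exactly $3g'-3+d$, which is the content we are proving.

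The main obstacle I expect is the careful bookkeeping of Teichm\"uller structures under the quotient: one must verify that passing to $C/G$ really gives a \emph{well-defined} point of $\cT_{g',d}$ (not just of the coarse moduli space), i.e. that the isotopy class of the induced marking depends only on $[C,[f]]$ and on the chosen representative $\tau$, and that two different choices are related consistently. This requires knowing that an isotopy of $f$ fixing the $G$-action can be taken to be $G$-equivariant, which uses the fact that the fixed locus $\cT_g^{\tau(G)}$ is connected and the action of $G$ on $C$ is rigid in the appropriate sense (the topological type being locally constant). Once this is settled, biholomorphy follows from the facts that both spaces are complex manifolds, the map is a holomorphic bijection, and a holomorphic bijection of complex manifolds is a biholomorphism; the homeomorphism-to-a-ball statement is then inherited from the corresponding (classical) statement for $\cT_{g',d}$.
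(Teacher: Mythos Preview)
The paper does not give its own proof of this theorem; it is quoted from \cite{Harvey} and \cite{Cat00} (with a pointer to \cite{GoHa}). The paragraph immediately following the statement, however, spells out the bi-holomorphism and its inverse exactly along your lines --- passing to the quotient with the induced marking $\theta \colon C'/G \to \Sigma_g'/G$ in one direction, and reconstructing the $G$-cover from the monodromy $\mu \colon \pi_1(\Sigma_g'/G, y) \to G$ via Riemann's existence theorem in the other --- so your proposal is consistent with, and more detailed than, the sketch the paper provides.
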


Let us give an interpretation of the previous theorem.
Given a point $[C, [f]] \in \cT_g^{\tau(G)}$, let 
$\alpha \colon \sigma_{[f]}^{-1} \circ \tau  \colon G \to  {\rm Aut}(C)$.
Then $f\colon (C, \alpha) \to (\Sigma_g, f\circ \alpha \circ f^{-1})$ is $G$-equivariant, so it yields 
an orientation-preserving homeomorphism of  $\theta \colon C'/G \to \Sigma_g'/G$,
where $C'\subseteq C$ and $\Sigma_g' \subseteq \Sigma_g$
are the loci of points with trivial stabilizer. The bi-holomorphism in Theorem \ref{HarCat}
sends $[C,[f]]$ to  $\left[ \Sigma_g'/G, [\theta] \right] \in \cT_{g',d}$.

We describe now the map $\cT_{g', d} \to M_g(G,\tau)$, which is the composition of
the inverse of the previous one, $\cT_{g',d} \to \cT_g^{\tau(G)}$,  with  the quotient 
$\cT_g^{\tau(G)} \to \cT_g^{\tau (G)}/{\rm C}_{\Gamma_g}(\tau (G))$.
Let $p \colon \Sigma_g \to \Sigma_g/G$ be the quotient map, let $y\in \Sigma_g'/G$,
and let $x \in p^{-1}(y)$.
Let $\mu \colon \pi_1\left( \Sigma_g'/G, y \right) \to G$ be the  monodromy of  
the covering $p$ restricted to $\Sigma_g'$.
For any $[D', [\theta]] \in \cT_{g',d}$ the composition
$$
\mu \circ \theta_* \colon \pi_1(D', \theta^{-1}(y)) \to G \, 
$$
gives a $G$-cover $C' \to D'$, hence  a point 
$[C, \alpha] \in M_g(G,\tau)$. Then the map $\cT_{g', d} \to M_g(G,\tau)$ 
sends $[D', [\theta]]$ to $[C, \alpha]$.

Notice that the class of $\mu$,
$$
[\mu] \in {\rm Hom}(\pi_1\left( \Sigma_g'/G, y \right) , G)/G \, ,
$$ 
where $G$ acts by conjugation, does not depend from the choice of $y$, and that
$\Gamma_{g',d}$ acts on ${\rm Hom}(\pi_1\left( \Sigma_g'/G, y \right) , G)/G$.

It follows from this that
$$
M_g(G, \tau ) \cong \cT_{g',d}/{\rm Stab}_{\Gamma_{g',d}}([\mu]) \, .
$$
In particular the homology of $M_g(G,\tau)$ with rational coefficients
can be computed as the homology of the group ${\rm Stab}_{\Gamma_{g',d}}([\mu])$:
$$
H_n(M_g(G,\tau) ; \Q) \cong H_n({\rm Stab}_{\Gamma_{g',d}}([\mu]); \Q)  \, .
$$

Let now $(\alpha_1, \beta_1, \ldots , \alpha_{g'}, \beta_{g'}, \gamma_1, \ldots , \gamma_d)$
be  a geometric basis
of the fundamental group $\pi_1(\Sigma_{g'} \setminus \{ y_1, \ldots , y_d\}, y)$
(here we follow the notation of \cite{CLP16}). 
Setting $a_i = \mu (\alpha_i), b_i= \mu (\beta_i), c_j=\mu (\gamma_j)$, we obtain
an element
$$
[(a_1, b_1, \ldots , a_{g'}, b_{g'}, c_1, \ldots , c_d)] \in G^{2g'+d}/G \, ,
$$
where $G$ acts diagonally by conjugation. This yields an injective correspondence between 
classes $[\mu]$ of monodromies and elements of $G^{2g'+d}/G$.
$\Gamma_{g', d}$
acts on $G^{2g'+d}/G$ (see e.g. \cite[Sec 2.]{CLP16}) and 
the homology of ${\rm Stab}_{\Gamma_{g',d}}([\mu])$ is isomorphic to 
the equivariant homology of the orbit 
of $[(a_1, b_1, \ldots , a_{g'}, b_{g'}, c_1, \ldots , c_d)]$ 
under the action of $\Gamma_{g',d}$. 
This motivates  the following 
\begin{question}\cite{CLPHstab}
Let  $n, d \in \N$. Are there constants $a, b$ such that the dimension of 
$H_n^{\Gamma_{g',d}}(G^{2g'+d}/G ; \Q)$ is independent of $g'$, in the range $g' > an+b$?
\end{question}
We proved in \cite{CLP16} that the previous question has an affermative answer in the case where $n=0$.

\section{Level structures and Teichm\"uller structures}
In this section we recall some basic results  and we fix the notation,
for the proofs and for more details we refer to \cite[Chapter XVI]{ACG}.
Given two groups $G_1$ and $G_2$ we denote with ${\rm Hom}(G_1, G_2)$ the set of homomorphisms from $G_1$ to $G_2$.
Notice that there is an action of $G_2$ on ${\rm Hom}(G_1, G_2)$, which is induced by the action of $G_2$ on itself by conjugation
($(g,h)\mapsto g^{-1}hg$).
An \textbf{exterior group homomorphism} 
from $G_1$ to $G_2$ is an element of the quotient set ${\rm Hom}(G_1, G_2)/G_2$.
For any $\varphi \in {\rm Hom}(G_1, G_2)$ we denote with $\hat \varphi$
its class in ${\rm Hom}(G_1, G_2)/G_2$. 
We will say that $\hat \varphi$ is an exterior group monomorphism (respectively epimorphism, isomorphism)
if $\varphi$ is a monomorphism  (respectively epimorphism, isomorphism).

Let  $X$ be a path connected topological space. 
For any pair of points $x, y \in X$ and for any continuous path $\gamma$ from $x$ to $y$,
let $\varphi_{\gamma} \colon \pi_1(X,x) \to \pi_1(X,y)$ be the isomorphism that sends any $[c]\in \pi_1(X,x)$
to $[\gamma^{-1}\cdot c \cdot \gamma]$, where $c$ is a loop in $X$ based at $x$ and $[c]$ is its homotopy class.\\
Let $H$ be a  group. We will identify two exterior homomorphisms $\hat \alpha \in {\rm Hom}(\pi_1(X,x), H)/H$ and 
$\hat \beta  \in {\rm Hom}(\pi_1(X,y), H)/H$
if $\hat \alpha = \widehat{\beta \circ \varphi_{\gamma}}$. 
Notice that this definition does not depend on the choice of $\gamma$ and yields an equivalence relation on the disjoint union
$$
\bigsqcup_{x\in X} {\rm Hom}(\pi_1(X,x), H)/H \, .
$$
The class of $\hat \alpha$ will be denoted with $[\alpha]$.

\begin{dfn}
A \textbf{Teichm\"uller structure of level $H$ on  $X$} 
is the class $[\alpha]$ of an exterior epimorphism  $\hat \alpha \in {\rm Hom}(\pi_1(X,x), H)/H$, for  $x\in X$.
\end{dfn}

Let now $C$ and $C'$ be two compact Riemann surfaces. Let $[\alpha]$ and $[\alpha']$
be two Teichm\"uller structures of level $H$ on $C$ and $C'$, respectively. 
We  say that the pairs   $(C, [\alpha])$  and $(C', [\alpha'])$
are isomorphic if there exists an isomorphism $F\colon C \to C'$ such that $[\alpha]=[\alpha' \circ F_*]$,
where $F_* \colon \pi_1 (C, x) \to \pi_1 (C', F(x))$ is the homomorphism induced by $F$, 
$\alpha \colon \pi_1 (C, x) \to H$ and $\alpha' \colon \pi_1 (C', F(x)) \to H$ are representatives of $[\alpha]$
and $[\alpha']$, respectively.
The isomorphism class of $(C,[\alpha])$ will be denoted $[C; \alpha]$.
The set of isomorphism classes of  compact Riemann surfaces  of genus $g$ 
with Teichm\"uller structures of level  $H$ will be denoted ${}_HM_g$.
When $g\geq 2$ it is possible to define a structure of complex analytic space on ${}_HM_g$,  called the 
\textit{moduli space of genus $g$ curves with Teichm\"uller structure of level  $H$}.
We refer to \cite{ACG} for more details.

\subsection{Level structures and Teichm\"uller structures}\label{lsts}
Let  $[\psi]$ be a Teichm\"uller structure of level $H$ on $\Sigma_g$.
The following map defines a morphism of complex analytic spaces:
\[
t_{[\psi]} \colon  \mathcal{T}_g \to {}_HM_g \, , \quad  [C, [f]] \mapsto [C; \psi \circ f_*] \, ,
\]
where $f$ is a representative of $[f]$ and $\psi \colon \pi_1 (\Sigma_g, f(x)) \to H$ is a representative of $[\psi]$.

\begin{rmk}
As explained in \cite{ACG}, the mapping class group $\Gamma_g$ acts on the set of Teichm\"uller structures of level $H$ on $\Sigma_g$
as follows: $[\psi] \cdot [\gamma] = [\psi \circ \gamma_*]$. Moreover every connected component of  ${}_H M_g$
coincides with $t_{[\psi]}(\mathcal{T}_g)$, for some $[\psi]$, we denote $t_{[\psi]}(\mathcal{T}_g)$ by  $M_g[\psi]$. 
Hence we have the following decomposition, 
\[
{}_H M_g = \coprod_{[\psi] \, {\rm mod} \, \Gamma_g}  M_g[\psi] \, .
\]
\end{rmk}
Let $\Lambda_{[\psi]} : = \{ [\gamma] \in \Gamma_g \, | \, [\psi] \cdot [ \gamma] = [\psi] \}$.
Then 
$$
M_g[\psi] = \mathcal{T}_g / \Lambda_{[\psi]} \, .
$$
Furthermore, if $\Lambda_{[\psi]}$ is a normal subgroup of $\Gamma_g$ and $\Gamma_g[\psi]:= \Gamma_g/\Lambda_{[\psi]}$,
then 
$$
M_g = M_g[\psi] / \Gamma_g[\psi] \, .
$$

We report the following result from \cite{ACG}, where  a characteristic subgroup of 
$\pi_1(\Sigma_g, x)$ is a subgroup that is mapped to itself by every automorphism of $\pi_1(\Sigma_g, x)$. 
\begin{lmm}\label{LambdaNormal}
If $\ker (\psi)$ is a characteristic subgroup, then $\Lambda_{[\psi]}$ is a normal subgroup of $\Gamma_g$.
\end{lmm}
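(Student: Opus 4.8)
The plan is to show directly that $\Lambda_{[\psi]}$ is closed under conjugation by arbitrary elements of $\Gamma_g$, using the hypothesis that $\ker(\psi)$ is characteristic. Recall that $\Lambda_{[\psi]} = \{[\gamma] \in \Gamma_g \mid [\psi \circ \gamma_*] = [\psi]\}$, where the equality of Teichm\"uller structures of level $H$ means that $\psi \circ \gamma_*$ and $\psi$ differ by an inner automorphism of $H$, i.e.\ $\psi \circ \gamma_* = c_h \circ \psi$ for some $h \in H$, where $c_h$ denotes conjugation by $h$. The first observation is that $[\gamma] \in \Lambda_{[\psi]}$ if and only if $\gamma_*$ preserves the kernel of $\psi$: indeed, if $\psi \circ \gamma_* = c_h \circ \psi$ then $\ker(\psi \circ \gamma_*) = \ker(\psi)$, so $\gamma_*^{-1}(\ker\psi) = \ker\psi$; conversely, if $\gamma_*$ preserves $\ker\psi$, then $\psi \circ \gamma_*$ and $\psi$ are two epimorphisms $\pi_1(\Sigma_g, x) \to H$ with the same kernel, hence differ by an automorphism of $H$ — but here one must be slightly careful, as that automorphism need not be inner in general. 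So the cleaner route is to work with the condition ``$\gamma_*$ preserves $\ker\psi$'' only as a necessary condition, and handle sufficiency through the structure actually used.

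Here is the argument I would actually run. Let $[\gamma] \in \Lambda_{[\psi]}$ and let $[\delta] \in \Gamma_g$ be arbitrary; I want $[\delta][\gamma][\delta]^{-1} \in \Lambda_{[\psi]}$, i.e.\ $\psi \circ (\delta \gamma \delta^{-1})_* = \psi \circ \delta_* \circ \gamma_* \circ \delta_*^{-1}$ equals $\psi$ up to an inner automorphism of $H$. Since $\ker\psi$ is characteristic in $\pi_1(\Sigma_g,x)$, the automorphism $\delta_*^{-1}$ preserves $\ker\psi$, and likewise $\delta_*$; moreover $\gamma_*$ preserves $\ker\psi$ by the necessary condition above. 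Hence the composite $\delta_* \circ \gamma_* \circ \delta_*^{-1}$ preserves $\ker\psi$, so $\psi \circ \delta_* \circ \gamma_* \circ \delta_*^{-1}$ factors as $\theta \circ \psi$ for a unique $\theta \in \operatorname{Aut}(H)$. It remains to see $\theta$ is inner. For this, write $\psi \circ \gamma_* = c_h \circ \psi$ (as $[\gamma] \in \Lambda_{[\psi]}$), and let $\bar\delta \in \operatorname{Aut}(H)$ be the automorphism with $\psi \circ \delta_* = \bar\delta \circ \psi$ (which exists, again because $\ker\psi$ is characteristic). Then
\[
\psi \circ \delta_* \circ \gamma_* \circ \delta_*^{-1} = \bar\delta \circ \psi \circ \gamma_* \circ \delta_*^{-1} = \bar\delta \circ c_h \circ \psi \circ \delta_*^{-1} = \bar\delta \circ c_h \circ \bar\delta^{-1} \circ \psi = c_{\bar\delta(h)} \circ \psi,
\]
so $\theta = c_{\bar\delta(h)}$ is inner. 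Therefore $[\delta\gamma\delta^{-1}] \in \Lambda_{[\psi]}$, proving normality.

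The main subtlety — and the step I expect to be the real content — is the passage from ``$\ker\psi$ characteristic'' to ``for every $[\delta] \in \Gamma_g$ there is $\bar\delta \in \operatorname{Aut}(H)$ with $\psi \circ \delta_* = \bar\delta \circ \psi$.'' This uses that $\psi$ is surjective, so it identifies $H$ with $\pi_1(\Sigma_g,x)/\ker\psi$; since $\delta_*$ preserves $\ker\psi$, it descends to an automorphism of the quotient, which is exactly $\bar\delta$. One should also note the map $[\delta] \mapsto \bar\delta$ is a homomorphism $\Gamma_g \to \operatorname{Aut}(H)$ (well-defined up to nothing, in fact, since $\delta_*$ is literally an automorphism of $\pi_1(\Sigma_g,x)$ once a representative homeomorphism fixing $x$ is chosen, and changing the representative changes $\delta_*$ by an inner automorphism, hence changes $\bar\delta$ by an inner automorphism of $H$ — which does not affect the computation above). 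Once this dictionary is set up, the displayed computation is a formal manipulation of conjugations in $\operatorname{Aut}(H)$ and finishes the proof.
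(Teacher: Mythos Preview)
Your argument is correct and rests on exactly the same key observation as the paper's proof: since $\ker(\psi)$ is characteristic and $\psi$ is surjective, every $\delta_* \in \operatorname{Aut}(\pi_1(\Sigma_g,x))$ descends to an automorphism $\bar\delta$ of $H$ satisfying $\psi\circ\delta_* = \bar\delta\circ\psi$. Where you then verify normality by a direct conjugation computation (showing $\psi\circ(\delta\gamma\delta^{-1})_* = c_{\bar\delta(h)}\circ\psi$), the paper packages the same content more concisely: the assignment $[\gamma]\mapsto[\bar\gamma]$ is a well-defined group homomorphism $\Gamma_g \cong \operatorname{Out}^+(\pi_1(\Sigma_g)) \to \operatorname{Out}(H)$, and $\Lambda_{[\psi]}$ is precisely its kernel, hence normal. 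Your final paragraph is in fact one step away from this formulation --- you note that $[\delta]\mapsto\bar\delta$ is a homomorphism and that it is well-defined only up to inner automorphisms of $H$; passing to $\operatorname{Out}(H)$ and observing that $\Lambda_{[\psi]}$ is exactly the preimage of the identity would have given you the paper's one-line finish.
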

\begin{proof}
Under our hypotheses  any automorphism $\gamma$ of 
$\pi_1(\Sigma_g)$ induces an automorphism $\bar{\gamma}$ of $H$ such that $\psi \circ \gamma = \bar{\gamma} \circ \psi$.
This yields a group homomorphism ${\rm Out}^+(\pi_1(\Sigma_g)) \to {\rm Out}(H)$, $[\gamma] \mapsto [\bar{\gamma}]$,
whose kernel is $\Lambda_{[\psi]}$, under the identification of ${\rm Out}^+(\pi_1(\Sigma_g))$ with $\Gamma_g$.
\end{proof}

In the case where $\ker (\psi)$ is a characteristic subgroup we can describe the action of $\Gamma_g$ on $M_g[\psi]$ as follows
(\cite{ACG}).
\begin{lmm}
Let $\ker (\psi)$ be a characteristic subgroup of $\pi_1(\Sigma_g)$. Then $\Gamma_g$ acts on $M_g[\psi]$ as follows:
$$
[\gamma] \cdot [C; \alpha] = [C; \bar{\gamma} \circ \alpha] \, ,
$$
where $\bar{\gamma}$ is the automorphism of  $H$ defined in the proof of Lemma \ref{LambdaNormal}.
\end{lmm}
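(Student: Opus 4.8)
The statement to prove is that when $\ker(\psi)$ is a characteristic subgroup of $\pi_1(\Sigma_g)$, the action of $\Gamma_g$ on $M_g[\psi] = \mathcal{T}_g/\Lambda_{[\psi]}$ is given by $[\gamma]\cdot[C;\alpha] = [C; \bar{\gamma}\circ\alpha]$, where $\bar{\gamma}$ is the automorphism of $H$ induced by $\gamma$ via the identity $\psi\circ\gamma = \bar\gamma\circ\psi$ from the proof of Lemma~\ref{LambdaNormal}.

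The plan is to trace the definitions through carefully. First I would recall that a point of $M_g[\psi]$ in the image of $t_{[\psi]}$ is of the form $[C;\psi\circ f_*]$ for some $[C,[f]]\in\mathcal{T}_g$, and that the induced action of $\Gamma_g$ on $M_g[\psi]$ comes (via the quotient $\mathcal T_g \to \mathcal T_g/\Lambda_{[\psi]}$ and the $\Gamma_g$-action on $\mathcal{T}_g$) from $[\gamma]\cdot[C,[f]] = [C,[\gamma\circ f]]$. So I would start from $[\gamma]\cdot[C;\psi\circ f_*]$, represent it via Teichmüller space as the class of $[C,[\gamma\circ f]]$, and apply $t_{[\psi]}$ to get $[C;\psi\circ(\gamma\circ f)_*] = [C;\psi\circ\gamma_*\circ f_*]$. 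Here I must be slightly careful about whether $\gamma_*$ means the map on $\pi_1$ induced by a representative homeomorphism and about base points, but since everything is taken up to the exterior (conjugation) equivalence and up to the equivalence identifying fundamental groups at different base points, these ambiguities wash out; this is the one routine bookkeeping step.

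Next I would invoke the characteristic hypothesis: since $\ker(\psi)$ is characteristic, the automorphism $\gamma_*$ of $\pi_1(\Sigma_g)$ descends to an automorphism $\bar\gamma$ of $H \cong \pi_1(\Sigma_g)/\ker(\psi)$, and by construction $\psi\circ\gamma_* = \bar\gamma\circ\psi$ — exactly the relation established in the proof of Lemma~\ref{LambdaNormal}. Substituting, $[C;\psi\circ\gamma_*\circ f_*] = [C;\bar\gamma\circ\psi\circ f_*] = [C;\bar\gamma\circ\alpha]$ where $\alpha = \psi\circ f_*$. This gives the claimed formula. I would then note that this is well-defined: if $[\gamma']$ differs from $[\gamma]$ by an element of $\Lambda_{[\psi]}$, then $\bar{\gamma'} = \bar\gamma$ in $\mathrm{Out}(H)$ (indeed $\Lambda_{[\psi]}$ is precisely the kernel of $\Gamma_g\to\mathrm{Out}(H)$ by Lemma~\ref{LambdaNormal}), so $\bar{\gamma'}\circ\alpha$ and $\bar\gamma\circ\alpha$ define the same level structure $[C;-]$ (which only remembers the exterior homomorphism); and if $(C,\alpha)\cong(C',\alpha')$ via $F$, functoriality of $F_*$ immediately gives $(C,\bar\gamma\circ\alpha)\cong(C',\bar\gamma\circ\alpha')$.

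The main (mild) obstacle is purely notational: keeping straight the three layers of equivalence in play — conjugation in $H$ (exterior homomorphisms), change of base point in $\pi_1$, and isotopy/isomorphism of the surface — and checking that the substitution $\psi\circ\gamma_* = \bar\gamma\circ\psi$ is compatible with all of them. There is no genuine difficulty: once one is willing to work with a fixed base point and fixed representatives, the computation is a one-line substitution, and the passage back to well-definedness is formal. I would therefore keep the write-up short, referencing \cite{ACG} for the base-point bookkeeping and citing the proof of Lemma~\ref{LambdaNormal} for the existence and defining property of $\bar\gamma$.
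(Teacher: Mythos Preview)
Your proposal is correct; the computation $[\gamma]\cdot[C;\psi\circ f_*] = [C;\psi\circ\gamma_*\circ f_*] = [C;\bar\gamma\circ\psi\circ f_*]$ together with the well-definedness checks is exactly the natural argument. Note, however, that the paper does not actually supply a proof for this lemma: it is stated as a result taken from \cite{ACG}, with no \texttt{proof} environment following it, so there is nothing in the paper to compare your argument against beyond the defining relation $\psi\circ\gamma=\bar\gamma\circ\psi$ from Lemma~\ref{LambdaNormal}, which you use in the same way.
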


Let now $m\in \Z_{\geq 1}$ and let $\chi_m \colon \pi_1(\Sigma_g) \to H_1(\Sigma_g, \Z/m\Z)$
be the composition of the natural morphism $\pi_1(\Sigma_g)\to H_1(\Sigma_g, \Z)$
with the reduction modulo $m$. Then $H_1(\Sigma_g, \Z/m\Z)$ is a strongly characteristic quotient 
of $\pi_1(\Sigma_g)$ (i.e. there is only one subgroup $K$ of $\pi_1(\Sigma_g)$
such that $\pi_1(\Sigma_g)/K$ is isomorphic to $H_1(\Sigma_g, \Z/m\Z)$),
 in particular $\ker (\chi_m)$ is a characteristic subgroup of $\pi_1(\Sigma_g)$.
In this situation we use the following notation:
$$
M_g[m] := M_g[\chi_m] \, , \quad \Gamma_g[m] := \Gamma_g[\chi_m] \, , \quad \Lambda_{[m]}:= \Lambda_{[\chi_m]} \, ,
\quad t_{[m]}:= t_{[\chi_m]} .
$$
Furthermore we have that 
$$
\Gamma_g[m] \cong {\rm Sp}_{2g} (\Z / m\Z)
$$
and moreover $M_g[m]$ coincides with the set of isomorphism classes of pairs $(C,  \rho)$,
where $\rho \colon H_1(C, \Z/m\Z) \to (\Z/m\Z)^{2g}$ is a symplectic isomorphism, with respect to the 
intersection form on $H_1(C, \Z/m\Z)$ and the standard symplectic form on $(\Z/m\Z)^{2g}$.

\section{Smooth covers of $\mathcal{M}_g(G, \tau)$}\label{covers}
Let  $\tau \colon G \to \Gamma_g$ be an injective homomorphism,
we denote with ${\rm C}_\tau$ the centralizer of $\tau (G)$ in $\Gamma_g$, and with ${\rm N}_\tau$ 
the normalizer of $\tau (G)$ in $\Gamma_g$.
For any subgroup $H\leq \Gamma_g$, its image under the quotient morphism 
$\Gamma_g \to \Gamma_g[m]=\Gamma_g/\Lambda_{[m]}$ will be denoted by $\bar H$.

\begin{prop}\label{smoothalgcover}
Let $m\geq 3$ be an integer and  $Z_\tau:={\mathcal{T}_g^{\tau(G)}}/({\Lambda_{[m]} \cap {\rm C}_\tau})$.
Then $Z_\tau$ is a complex manifold and, for $M_g(G, \tau)$ defined in Definition \ref{MgGtau}, 
the quotient morphism $\mathcal{T}_g^{\tau(G)} \to M_g(G, \tau)$ induces a finite morphism
\begin{equation}\label{sc}
Z_\tau \to M_g(G, \tau) \, ,
\end{equation}
which gives an isomorphism between $M_g(G, \tau)$ and $Z_\tau/\bar {\rm C}_\tau$.
\end{prop}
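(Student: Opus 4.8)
The plan is to exhibit both $Z_\tau$ and $M_g(G,\tau)$ as quotients of $\mathcal{T}_g^{\tau(G)}$ by two finite-index subgroups of ${\rm C}_\tau$ and then compare them. First I would record the structural facts that make this possible. By Definition~\ref{MgGtau} together with the identity ${\rm Stab}_{\Gamma_g}(\tau)={\rm C}_{\Gamma_g}(\tau(G))={\rm C}_\tau$ one has $M_g(G,\tau)=\mathcal{T}_g^{\tau(G)}/{\rm C}_\tau$. Since ${\rm C}_\tau$ centralizes $\tau(G)$, it preserves the fixed locus $\mathcal{T}_g^{\tau(G)}$ (if $[\gamma]$ commutes with $\tau(G)$ and $x$ is $\tau(G)$-fixed, then so is $[\gamma]\cdot x$), and likewise for the subgroup $\Lambda_{[m]}\cap{\rm C}_\tau$. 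Because $\ker(\chi_m)$ is characteristic, $\Lambda_{[m]}$ is normal in $\Gamma_g$ (Lemma~\ref{LambdaNormal}), hence $\Lambda_{[m]}\cap{\rm C}_\tau$ is normal in ${\rm C}_\tau$, with quotient isomorphic to $\bar{\rm C}_\tau$, a subgroup of the finite group $\Gamma_g[m]\cong{\rm Sp}_{2g}(\Z/m\Z)$ and therefore finite.

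The key step — and the only point where the hypothesis $m\geq 3$ is used — is to show that $\Lambda_{[m]}\cap{\rm C}_\tau$ acts freely and properly discontinuously on $\mathcal{T}_g^{\tau(G)}$, so that the quotient $Z_\tau$ is a complex manifold. Proper discontinuity is automatic: it is inherited from the properly discontinuous action of $\Gamma_g$ on $\mathcal{T}_g$, restricted to the closed submanifold $\mathcal{T}_g^{\tau(G)}$. For freeness it suffices that $\Lambda_{[m]}$ act freely on $\mathcal{T}_g$, and I would deduce this from the fact that any $[\gamma]\in\Lambda_{[m]}$ fixing a point $[C,[f]]$ lies in ${\rm Stab}_{\Gamma_g}([C,[f]])=\sigma_{[f]}({\rm Aut}(C))$, hence has finite order, combined with the classical fact that $\Lambda_{[m]}$ is torsion-free for $m\geq 3$ (see \cite[Chapter~XVI]{ACG}); so $[\gamma]=1$. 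This is the technical heart of the argument, and everything else is formal.

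To conclude, I would observe that, since $\Lambda_{[m]}\cap{\rm C}_\tau\leq{\rm C}_\tau$, the quotient morphism $\mathcal{T}_g^{\tau(G)}\to M_g(G,\tau)$ factors through $Z_\tau=\mathcal{T}_g^{\tau(G)}/(\Lambda_{[m]}\cap{\rm C}_\tau)$, which produces the morphism \eqref{sc}. The residual action of $\bar{\rm C}_\tau={\rm C}_\tau/(\Lambda_{[m]}\cap{\rm C}_\tau)$ on $Z_\tau$ is well defined by normality, and the standard identification of iterated quotients gives $Z_\tau/\bar{\rm C}_\tau=\mathcal{T}_g^{\tau(G)}/{\rm C}_\tau=M_g(G,\tau)$. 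Since $\bar{\rm C}_\tau$ is a finite group acting on the complex manifold (in particular separated complex analytic space) $Z_\tau$, the quotient map $Z_\tau\to Z_\tau/\bar{\rm C}_\tau$ is finite, and it coincides with \eqref{sc}. I expect no real difficulty here beyond verifying that \eqref{sc} is precisely the map obtained in this way; the main obstacle remains the freeness/smoothness step, which is exactly what forces the restriction $m\geq 3$.
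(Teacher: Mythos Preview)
Your proposal is correct and follows essentially the same route as the paper: freeness of the $\Lambda_{[m]}$-action on $\mathcal{T}_g$ for $m\geq 3$ gives smoothness of $Z_\tau$, and normality of $\Lambda_{[m]}$ in $\Gamma_g$ gives the iterated-quotient identification $Z_\tau/\bar{\rm C}_\tau\cong M_g(G,\tau)$ with $\bar{\rm C}_\tau$ finite. The only cosmetic difference is that you deduce freeness from the torsion-freeness of $\Lambda_{[m]}$, whereas the paper invokes directly the Serre-type lemma that an automorphism of $C$ acting trivially on $H_1(C,\Z/m\Z)$ is the identity \cite[Prop.~(2.8), p.~512]{ACG}; these are two phrasings of the same fact.
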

\begin{proof}
The first claim follows from the fact that $\Lambda_{[m]}$ acts freely on $\mathcal{T}_g$, 
since $m\geq 3$. To see this, let $[\gamma] \in \Lambda_{[m]}$
and let $[C,[f]] \in \mathcal{T}_g$ such that $[\gamma] \cdot [C,[f]] = [C,[f]]$.
Then $[\gamma] \in {\rm Aut}(C)$. Let $\bar \gamma_*$ be the induced automorphism of $H_1(C,\mathbb{Z}/m\mathbb{Z})$.
Since $[\gamma] \in \Lambda_{[m]}$,  $\bar \gamma_* ={\rm Id}$, then $[\gamma] = {\rm Id}_C$
(\cite[Prop. (2.8), p. 512]{ACG}).

The last claim follows from the fact that $\Lambda_{[m]}$ is a normal subgroup of $\Gamma_g$ (Lemma \ref{LambdaNormal})
and  ${\bar{\rm C}_\tau} = \frac{{\rm C}_\tau}{{\rm C}_\tau \cap \Lambda_{[m]}}$, 
so 
$$
M_g(G, \tau) = \mathcal{T}_g^{\tau(G)}/{\rm C}_\tau = Z_\tau/{\bar{\rm C}_\tau} \, .
$$
\end{proof}

Now we prove  that $Z_\tau$ is a quasi-projective algebraic variety.
Although this fact can be deduced from \cite{GoHa} or \cite{Cat12},  we give here, for completeness,  
an elementary proof by showing that there is a finite morphism
$Z_\tau \to \left( M_g[m]\right)^{\overline{\tau ( G )}}$, where $\left( M_g[m]\right)^{\overline{\tau ( G )}}$
is the set of points of $M_g[m]$ fixed by the action of $\overline{\tau ( G )}$.
By a finite morphism of complex analytic spaces we mean a proper morphism with finite fibers.

\begin{prop}
Let $t_{[m]} \colon {\mathcal T}_g \to M_g[m]$ be the morphism defined in Section \ref{lsts}, $m\geq 3$.
Then the following statements hold true:
\begin{itemize}
\item[1)] $t_{[m]}({\mathcal T}_g^{\tau (G)}) = \left( M_g[m]\right)^{\overline{\tau ( G )}}$;
\item[2)] let  $t_{[m]|}$ be the restriction of $t_{[m]}$ to ${\mathcal T}_g^{\tau (G)}$, then the 
following   diagram is commutative:
\[
\begin{tikzcd}
{\mathcal T}_g^{\tau (G)} \arrow[d, "t_{[m]|}"] \arrow[r] & Z_\tau \arrow[d, "q"] \\
\left( M_g[m]\right)^{\overline{\tau ( G )}} & \arrow[l, "p"] 
Z_\tau / \left( \frac{\Lambda_{[m]} \cap {\rm N}_\tau}{\Lambda_{[m]} \cap {\rm C}_\tau} \right)
\end{tikzcd}
\]
where the horizontal arrow to the top is the quotient map by $\Lambda_{[m]}\cap {\rm C}_\tau$, 
$q$  is the quotient map by 
$\frac{\Lambda_{[m]} \cap {\rm N}_\tau}{\Lambda_{[m]} \cap {\rm C}_\tau}$
and $p$ sends any equivalence class of the 
$(\Lambda_{[m]} \cap {\rm N}_\tau )$-action  to the corresponding equivalence class
of the $\Lambda_{[m]}$-action 
(under the identification of  $Z_\tau / \left( \frac{\Lambda_{[m]} \cap {\rm N}_\tau}{\Lambda_{[m]} \cap {\rm C}_\tau} \right)$
with $\frac{\mathcal{T}_g^{\tau (G)}}{\Lambda_{[m]} \cap {\rm N}_\tau}$);
\item[3)] the morphism $p\circ q$ is finite, therefore  $Z_\tau$ has a  structure of complex quasi-projective 
algebraic variety and $p\circ q$ is algebraic.
\end{itemize}
\end{prop}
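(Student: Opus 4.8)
The plan is to prove the three statements in order, with the bulk of the work going into statement (1) and into the finiteness in (3); statements (2) and the "therefore" in (3) are then essentially formal. For (1), I would argue by a chase through the identifications already set up in the paper. Recall that $M_g[m]$ parametrizes pairs $(C,\rho)$ with $\rho$ a symplectic isomorphism $H_1(C,\Z/m\Z)\iso(\Z/m\Z)^{2g}$, and that $\Gamma_g$ acts through $\Gamma_g[m]\cong\mathrm{Sp}_{2g}(\Z/m\Z)$. A point $[C,[f]]$ of $\mathcal{T}_g$ lies in $\mathcal{T}_g^{\tau(G)}$ exactly when $\tau(G)\subseteq\mathrm{Stab}_{\Gamma_g}([C,[f]])=\sigma_{[f]}(\mathrm{Aut}(C))$, i.e.\ when $\mathrm{Aut}(C)$ contains the subgroup $\alpha(G):=\sigma_{[f]}^{-1}\tau(G)$. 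Passing to level $m$, the point $t_{[m]}([C,[f]])=[C;\psi\circ f_*]$ is fixed by $\overline{\tau(g)}$ precisely when $\alpha(g)$ acts on $H_1(C,\Z/m\Z)$ in a way compatible, under $\rho$, with the given matrix $\overline{\tau(g)}\in\mathrm{Sp}_{2g}(\Z/m\Z)$; since $\alpha(G)\le\mathrm{Aut}(C)$ already guarantees such an automorphism exists, the inclusion $t_{[m]}(\mathcal{T}_g^{\tau(G)})\subseteq(M_g[m])^{\overline{\tau(G)}}$ is immediate. For the reverse inclusion I would take a point of $(M_g[m])^{\overline{\tau(G)}}$, lift it to some $[C,[f]]\in\mathcal{T}_g$ via surjectivity of $t_{[m]}$, and show that the fixedness at level $m$ forces the automorphisms realizing it to assemble into an honest injection $G\to\mathrm{Aut}(C)$ whose associated $\rho$ is $\Gamma_g$-conjugate to $\tau$; here one uses that $m\ge 3$ makes the map $\mathrm{Aut}(C)\to\mathrm{Sp}_{2g}(\Z/m\Z)$ injective (the rigidity statement \cite[Prop.\ (2.8), p.\ 512]{ACG} already invoked above), so the level-$m$ data pins down the automorphisms uniquely.

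For statement (2), the diagram just records two ways of passing from $\mathcal{T}_g^{\tau(G)}$ to $(M_g[m])^{\overline{\tau(G)}}$. The top-left composite $t_{[m]|}$ is the restriction of $t_{[m]}$, which by definition is the quotient by $\Lambda_{[m]}$ followed by restriction; but a point of $\mathcal{T}_g^{\tau(G)}$ and its $\Lambda_{[m]}$-translate land in $\mathcal{T}_g^{\tau(G)}$ simultaneously only when the translating element normalizes $\tau(G)$ (it must carry one fixed locus to itself), so the effective identification is $(M_g[m])^{\overline{\tau(G)}}\cong \mathcal{T}_g^{\tau(G)}/(\Lambda_{[m]}\cap\mathrm{N}_\tau)$. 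Since $Z_\tau=\mathcal{T}_g^{\tau(G)}/(\Lambda_{[m]}\cap\mathrm{C}_\tau)$ and $\Lambda_{[m]}\cap\mathrm{C}_\tau$ is normal in $\Lambda_{[m]}\cap\mathrm{N}_\tau$, the further quotient $q$ by $(\Lambda_{[m]}\cap\mathrm{N}_\tau)/(\Lambda_{[m]}\cap\mathrm{C}_\tau)$ yields exactly $\mathcal{T}_g^{\tau(G)}/(\Lambda_{[m]}\cap\mathrm{N}_\tau)$, and $p\circ q$ agrees with $t_{[m]|}$ by construction. Commutativity is then a matter of unwinding these identifications, with (1) guaranteeing that the common target is indeed $(M_g[m])^{\overline{\tau(G)}}$.

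For statement (3), finiteness of $p\circ q=t_{[m]|}$: the morphism $t_{[m]}\colon\mathcal{T}_g\to M_g[m]$ is the quotient by the group $\Lambda_{[m]}$, which acts freely (as $m\ge3$) and properly discontinuously, so $t_{[m]}$ is a covering map; its restriction to the closed $\tau(G)$-fixed submanifold $\mathcal{T}_g^{\tau(G)}$ has as image the closed analytic subset $(M_g[m])^{\overline{\tau(G)}}$ by (1), and the induced map onto this image is again a quotient by a properly discontinuous free action, now of $\Lambda_{[m]}\cap\mathrm{N}_\tau$; such a map is proper with finite fibers (the fibers being single orbits, which are finite because $\mathrm{N}_\tau\cap\Lambda_{[m]}$ acts with finite stabilizers — trivially, since the action is free — and the orbit inside a fixed $t_{[m]}$-fiber is finite as that whole fiber is a single $\Lambda_{[m]}$-orbit and hence finite). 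Since $M_g[m]$ is a quasi-projective variety for $m\ge3$, its closed analytic subset $(M_g[m])^{\overline{\tau(G)}}$ is quasi-projective (Chow / GAGA for the fixed locus, or directly: the fixed locus of a finite group acting algebraically is closed algebraic), and a finite morphism onto a quasi-projective variety from a normal complex manifold endows the source with a unique compatible quasi-projective algebraic structure, by Grauert–Remmert-type results on finite analytic covers — this is where one invokes that $p\circ q$ being finite transports algebraicity back to $Z_\tau$. The main obstacle I anticipate is the careful bookkeeping in statement (1), specifically verifying that a point fixed at level $m$ genuinely comes from a point of $\mathcal{T}_g^{\tau(G)}$ with the correct topological type $\tau$ rather than merely some conjugate subgroup of automorphisms; the rigidity input from $m\ge3$ is what makes this work, and assembling the individual fixed automorphisms into a homomorphism from $G$ (respecting relations) is the subtle point.
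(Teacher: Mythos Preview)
There is a genuine gap in (3). You write ``finiteness of $p\circ q=t_{[m]|}$'', but these are different maps: $t_{[m]|}$ has source $\mathcal{T}_g^{\tau(G)}$, while $p\circ q$ has source $Z_\tau$, and the two differ by the (typically infinite) quotient by $\Lambda_{[m]}\cap{\rm C}_\tau$. Your parenthetical attempt to show the fibers are finite (``that whole fiber is a single $\Lambda_{[m]}$-orbit and hence finite'') is simply wrong: $\Lambda_{[m]}$ is an infinite group acting freely, so its orbits in $\mathcal{T}_g$ are infinite. What must actually be shown is that $p\circ q\colon Z_\tau\to(M_g[m])^{\overline{\tau(G)}}$ has finite fibers and is closed. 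The paper does the first in two pieces: $q$ has finite fibers because $[{\rm N}_\tau:{\rm C}_\tau]<\infty$ (conjugation embeds ${\rm N}_\tau/{\rm C}_\tau$ into the permutations of $\tau(G)$), and $p$ has finite fibers because its fiber over $t_{[m]}([C,[f]])$ injects, via $\lambda\mapsto\lambda^{-1}\tau(G)\lambda$, into the finite set of subgroups of ${\rm Stab}_{\Gamma_g}([C,[f]])$. Closedness is handled separately, using proper discontinuity of the full $\Gamma_g$-action on $\mathcal{T}_g$.

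As a side remark, your assertion in (2) that $p$ is bijective is stronger than what the paper proves, and it is in fact correct---but your one-line justification (``it must carry one fixed locus to itself'') does not suffice: from $x,\lambda x\in\mathcal{T}_g^{\tau(G)}$ you only get that $\tau(G)$ and $\lambda^{-1}\tau(G)\lambda$ are both subgroups of ${\rm Stab}_{\Gamma_g}(x)$, and two subgroups of the same order need not coincide. The argument that actually works is that both subgroups map to $\overline{\tau(G)}$ under the map ${\rm Stab}_{\Gamma_g}(x)\to\Gamma_g[m]$, which is injective since $\Lambda_{[m]}$ is torsion-free for $m\ge 3$; hence they are equal.
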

\begin{proof}
1) Since $\Lambda_{[m]}$ is normal in $\Gamma_g$ (Lemma \ref{LambdaNormal}), $t_{[m]}$ is $G$-equivariant, therefore 
$t_{[m]}(\mathcal{T}_g^{\tau (G)}) \subseteq \left( M_g[m]\right)^{\overline{\tau ( G )}}$.
On the other hand, for any given $[C, \chi_m \circ f_*]\in \left( M_g[m]\right)^{\overline{\tau ( G )}}$ and $[\gamma] \in \tau (G)$,
from the equality
$$
[C, \chi_m \circ f_*] = \overline{[\gamma]} \cdot [C, \chi_m \circ f_*] 
$$
it follows that $[C,[f]]$ and $[\gamma]\cdot [C,[f]]$ map to the same point of $M_g[m]$. So, 
if $[\gamma] \not\in {\rm Stab}_{\Gamma_g}([C,[f]])$, it would be an element of finite order of $\Lambda_{[m]}$,
but this contradicts the fact that 
$\Lambda_{[m]}$ acts freely on $\mathcal{T}_g$.

The statement in 2) follows from the fact that $M_g[m] =t_{[m]} (\mathcal{T}_g) = \mathcal{T}_g/\Lambda_{[m]}$ and from 1).

To prove 3), we show that $p\circ q$ is closed and has finite fibers. 
Let us first show that $q$ has finite fibers, equivalently that 
$\Lambda_{[m]} \cap {\rm C}_\tau$ has finite index in $\Lambda_{[m]} \cap{\rm N}_\tau$. 
Notice that, for $\tau (G) = \{ h_1, \ldots , h_{|\tau ( G )|} \}$,
 ${\rm C}_\tau$ is the stabilizer of $(h_1, \ldots , h_{|\tau ( G )|}) \in \prod_{h\in \tau(G)} \tau (G)$
with respect to the   action of ${\rm N}_\tau$ given by conjugation on each factor. 
So $[ {\rm N}_\tau \, : \,  {\rm C}_\tau]< \infty$, hence also 
$[\Lambda_{[m]}\cap {\rm N}_\tau \, : \, \Lambda_{[m]}\cap {\rm C}_\tau]<
\infty$. To see  that $p$ has finite fibers, notice that for any
$[C,[\chi_m \circ f_*]] \in \left( M_g[m]\right)^{\overline{\tau ( G )}}$
$$
p^{-1}([C,[\chi_m \circ f_*]]) = \frac{\left( \Lambda_{[m]} \cdot [C,[f]] \right) \cap 
\cT_g^{\tau (G)}}{\Lambda_{[m]} \cap {\rm N}_\tau} \, ,
$$
and, since $\Lambda_{[m]}$ acts freely on $\cT_g^{\tau ( G )}$
(see the proof of Proposition \ref{smoothalgcover}), $p^{-1}([C,[\chi_m \circ f_*]])$ is in bijection with 
\begin{equation}\label{p_fibers}
\frac{ \{\lambda \in \Lambda_{[m]} \, | \, 
\lambda^{-1}\tau ( G) \lambda \subseteq  {\rm Stab}_{\Gamma_g}([C,[f]]) \} }{\Lambda_{[m]} \cap {\rm N}_\tau} \, .
\end{equation}
The claim follows since the map from the quotient set in \eqref{p_fibers} to the set of subgroups of 
${\rm Stab}_{\Gamma_g}([C,[f]])$, induced  by $\lambda \mapsto \lambda^{-1}\tau ( G) \lambda$,
is injective. \\
The fact that $p\circ q$ is closed follows from the fact that, for any  closed subset $A \subseteq \mathcal{T}_g^{\tau (G)}$,
the union of all $[\gamma]\cdot A$, $[\gamma] \in \Gamma_g$, is closed in $\mathcal{T}_g$
(see e.g. \cite{GoHa}, proof of Theorem 1).
Finally, $Z_\tau$  and $p\circ q$ are algebraic  by the Generalized Riemann Existence Theorem of Grauert-Remmert
\cite{GR58} (see  \cite[Theorem 3.2, Appendix B]{Har77}), 
$Z_\tau$ is quasi-projective since $\left( M_g[m]\right)^{\overline{\tau ( G )}}$ is so. 
\end{proof}

\medskip

\noindent \textit{Proof of Thm. \ref{mainthm}}
From the proof of Proposition \ref{mainprop} we have that 
$$
\cM_g (G) = \bigsqcup_{\tau \in \bbT} [\cT_g^{\tau (G)}/{\rm C}_\tau] \, , 
$$
where we use the notation of Section \ref{homology}. Since 
${\Lambda_{[m]} \cap {\rm C}_\tau}$ acts freely on $\cT_g^{\tau (G)}$ the stack quotient 
$[\cT_g^{\tau (G)}/({\Lambda_{[m]} \cap {\rm C}_\tau})]$ is represented by the variety
$Z_\tau = \cT_g^{\tau (G)}/({\Lambda_{[m]} \cap {\rm C}_\tau})$, hence
$$
\cM_g (G) = \bigsqcup_{\tau \in \bbT} \left[ Z_\tau/{\bar{\rm C}}_\tau\right] \, .
$$
The claim follows from the fact that the stack quotient of the algebraic variety 
$Z_\tau$ (Proposition \ref{smoothalgcover})
by the finite group ${\bar{\rm C}}_\tau$ is a Deligne-Mumford stack (see e.g. \cite[(4.6.1)]{LM-B}). \qed

\bibliographystyle{amsalpha}
\bibliography{bib}

\end{document}